\def \seq {\sim_s}
\def \norm#1{\|#1\|}
\newcommand{\R}{{\mathbb R}}
\newcommand{\N}{{\mathbb N}}
\newcommand{\ol}{\overline}
\newcommand{\calH}{\mathcal H}
\newcommand{\calU}{\mathcal U}
\newcommand{\rk}{\operatorname {rk}}
\newtheorem{teo}{Theorem }[section]
\newtheorem{prop}[teo]{Proposition }
\newtheorem{notation}[teo]{Notation }
\newtheorem{lem}[teo]{Lemma }
\newtheorem{defi}[teo]{Definition }
\begin{document}
\title{Local algebraic approximation of semianalytic sets}
%\thanks {2010 Mathematics Subject Classification 14P15, 32B20, 32S05}}

\author{M. Ferrarotti} 
\address{Dipartimento di Matematica\\Politecnico di Torino\\Corso Duca
degli Abruzzi 24\\I-10129 Torino, Italy}
%\curraddr{}
\email{ferrarotti@polito.it}
\thanks{This research was partially supported by M.I.U.R. and by G.N.S.A.G.A}

\author{E. Fortuna}
\address{Dipartimento di Matematica\\Universit\`a di Pisa\\Largo
  B. Pontecorvo 5\\I-56127 Pisa, Italy}
%\curraddr{}
\email{fortuna@dm.unipi.it}

\author{L. Wilson}
\address{Department of Mathematics\\University of Hawaii, Manoa\\Honolulu, HI 96822, USA}
%\curraddr{}
\email{les@math.hawaii.edu}
%\thanks{}

\subjclass[2010]{Primary 14P15, 32B20, 32S05}

%\keywords{}

\date{July 9, 2012}

%\dedicatory{}

\begin{abstract}
Two subanalytic subsets of $\R^n$ are called $s$-equivalent at a common point $P$ 
if the Hausdorff distance between their intersections with the sphere centered 
at $P$ of radius $r$ vanishes of order $>s$ when $r$ tends to $0$. In this paper 
we prove that  every $s$-equivalence class of a closed 
semianalytic set contains a semialgebraic 
representative of the same dimension. In other words any  semianalytic set can be 
locally approximated of any order $s$   by means of a semialgebraic set and hence, 
by previous results, also by means of an algebraic  one.
\end{abstract}

\maketitle

\section{Introduction}

In \cite{FFW-SNS} we introduced a notion of local metric proximity 
between two sets that we called $s$-equivalence: for a real $s\geq 1$, two 
subanalytic subsets of $\R^n$ are $s$-equivalent at a common point $P$ 
if the Hausdorff distance between their intersections with the sphere centered 
at $P$ of radius $r$ vanishes of order $>s$ when $r$ tends to $0$.

Given a subanalytic set $A\subset \R^n$ and a point $P\in A$, 
a natural question concerns the existence of an algebraic representative 
$X$ in the class of $s$-equivalence of $A$ at $P$; in that case we also say 
that $X$ approximates $A$ of order $s$ at $P$.

The answer to the previous question is in general negative for subanalytic sets which are not semianalytic, even for $s=1$ (see 
\cite{FFW-germs}).  Furtheremore, in  \cite{FFW-normal} we defined $s$-equivalence of two subanalytic sets along a common submanifold, and studied $1$-equivalence of a pair of strata to the normal cone of the pair.  By example we showed 
that a semianalytic normal cone to a linear $X$ may be not 1-equivalent to any semialgebraic set along $X$.
It is still an open problem whether a semialgebraic normal cone along a linear $X$ is $s$-equivalent to an
algebraic variety along $X$, for all $s$.   

On the other hand some partial positive answers were given in 
\cite{FFW-SNS} and \cite{FFW-germs}; in particular we proved that a subanalytic 
set $A\subset \R^n$  can be approximated of any order by an algebraic one in 
each of the following cases:

\noindent 
- $A$ is a closed semialgebraic set of positive codimension,\\
- $A$ is the zero-set $V(f)$ of a real analytic map $f$ whose regular points 
are dense in $V(f)$, \\
- $A$ is the image of a real analytic map $f$ having a finite fiber at $P$.

Using the previous results we also obtained that one-dimensional 
subanalytic sets, analytic surfaces in $\R^3$ and real analytic sets having 
a Puiseux-type parametrization admit an algebraic approximation of any order.

In the present paper we prove that any closed semianalytic set can be 
locally approximated of any order by a semialgebraic one having the same dimension. 
Using the main result of  \cite{FFW-SNS}, it follows that any closed semianalytic set 
of positive codimension admits an algebraic approximation of any order. Thus 
we obtain a complete positive answer to our question for the class of 
semianalytic sets. 

The algebraic approximation, elaborating the methods 
introduced in \cite{FFW-germs}, is obtained by taking sufficiently high order 
truncations of the analytic functions appearing in a presentation of the semianalytic 
set.

Finally, let us mention some possible future developments of these notions and ideas.
Since we can prove that two subanalytic sets $A,B$ are $1$-equivalent if 
and only if their tangent cones coincide (see also  \cite{FFW-SNS}), it would 
be interesting to extend the notion of tangent cone associating to $A$ a sort of
``tangent cone of order $s$'', say $C_s(A)$, in such a way that 
$A$ and $B$ are $s$-equivalent if and only if $C_s(A)=C_s(B)$.

There is currently a lot of interest in bilipschitz equivalence of varieties.  Most of the work has been in the 
complex case.  Two recent such examples are  \cite{BFGR} and  \cite{BFGO}.  The theory is closely tied up with
the notion of the tangent cone, exceptional subcones, and limits of tangent spaces.  The real case has been little
studied.  A good place to start is in the case of surfaces in $\R^3$, which is the only real case in which the
tangent cone, exceptional lines, and limits of tangent planes have been deeply analyzed (see  \cite{OW}).
The $s$-equivalence classes are Lipschitz invariants, so they should be a useful tool in this analysis.

\section{Basic notions and preliminary results}

If $A$ and $B$ are non-empty compact subsets of $\R^n$,   we denote
by $D(A,B)$ the classical Hausdorff distance, i. e.
$$
D(A,B)= \inf \ \{\epsilon\ |\ A\subseteq N_{\epsilon} (B), \
B\subseteq N_{\epsilon }
(A)\},
$$ 
where $N_{\epsilon } (A) =\{x\in \R^n\ |\ d(x,A)<\epsilon \}$ and 
$d(x,A)=\inf_{y\in A}\  \|x-y\|$.

If we let $\delta (A,B) = \sup _{x\in B} d(x,A) $, then $D(A,B)= \max
 \{\delta (A,B),\ \delta (B,A)\}$.
\smallskip

We will denote by $O$ the origin of $\R^n$ for any $n$.

We are going to introduce the notion of $s$-equivalence at a point; without loss 
of generality we can assume that this point is $O$.

\begin{defi} Let $A$ and $B$ be closed subanalytic subsets 
of $\R^n$ with $O\in A\cap B$. Let $s$ be a real number $\geq 1$. Denote by $S_r$ the 
sphere of radius $r$ centered at the origin. 
\begin{enumerate}
\item  We say that $A\leq_s B$  if either $O$ is isolated  in $A$, or if   
\item $O$ is non-isolated both in $A$ and in $B$ and 
$$\lim_{r\to 0}\frac{\delta(B \cap S_r,A\cap S_r)}{r^s} =0.$$ 
\item We say that $A$ and
$B$ are $s$--equivalent  (and we will write $A \sim_sB$) if 
$A\leq_s B$ and $B\leq_s A$. 
\end{enumerate}
\end{defi}

Observe that if  $O$ is non-isolated both in $A$ and in $B$, then  
$$A \sim_sB \qquad\mbox{ if and only if} \qquad  \displaystyle \lim_{r\to 0}\frac{D(A \cap S_r, B \cap S_r)}{r^s} =0.$$
Moreover, if $A\subseteq B$, then $A \leq_sB$ for any $s\geq 1$. 
It is easy to check that $\leq_s$ is transitive and that $\sim_s$ is an
equivalence relationship. The following result shows that $s$-equivalence  
has a good behavior  with respect to the union of sets:

\begin{prop}\label{union}\textup{(\cite{FFW-germs})}   Let $A$, $A'$, $B$ and $B'$ be closed
subanalytic subsets of $\R^n$.
\begin{enumerate}
\item If $A \leq_s B$ and $A' \leq_s B'$, then $A\cup A' \leq_s B\cup B'$.
\item If $A \sim_s B$ and $A' \sim_s B'$, then $A\cup A' \sim_s B\cup B'$.
\end{enumerate}
\end{prop}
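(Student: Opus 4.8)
The plan is to reduce both statements to a single elementary inequality for the one-sided distance $\delta$ of unions, and then to dispose of the degenerate cases in which $O$ is isolated. I would prove (1) first and deduce (2) by a symmetry argument. For (1), I would start with the trivial case: if $O$ is isolated in $A\cup A'$ then $A\cup A'\leq_s B\cup B'$ holds by definition, so I may assume $O$ is non-isolated in $A\cup A'$, hence non-isolated in $A$ or in $A'$. Say it is non-isolated in $A$; then the hypothesis $A\leq_s B$ places us in case (2) of the definition, so $O$ is non-isolated in $B$ as well, and therefore in $B\cup B'$. Thus both $A\cup A'$ and $B\cup B'$ have $O$ as a non-isolated point, and only the limit condition remains to be checked.

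The core step is the following inequality. Writing $X_r:=X\cap S_r$ and using $(A\cup A')_r=A_r\cup A'_r$, for every $x\in A_r$ one has $d(x,(B\cup B')_r)\le d(x,B_r)$, and for every $x\in A'_r$ one has $d(x,(B\cup B')_r)\le d(x,B'_r)$, since the distance to a larger set can only decrease. Taking suprema and splitting $(A\cup A')_r$ into its two pieces yields
$$\delta\big((B\cup B')\cap S_r,(A\cup A')\cap S_r\big)\ \le\ \max\big\{\delta(B\cap S_r,A\cap S_r),\,\delta(B'\cap S_r,A'\cap S_r)\big\}.$$
I would then argue that each term on the right, divided by $r^s$, tends to $0$. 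If $O$ is non-isolated in $A$, this is exactly the hypothesis $A\leq_s B$; if instead $O$ is isolated in $A$, then $A\cap S_r=\emptyset$ for all sufficiently small $r>0$, so that term vanishes and contributes nothing, and symmetrically for $A'$. Hence the maximum, divided by $r^s$, tends to $0$, which is precisely $A\cup A'\leq_s B\cup B'$.

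Statement (2) then follows immediately: from $A\sim_s B$ and $A'\sim_s B'$ we obtain both $A\leq_s B,\ A'\leq_s B'$ and $B\leq_s A,\ B'\leq_s A'$, so applying (1) in each direction gives $A\cup A'\leq_s B\cup B'$ together with $B\cup B'\leq_s A\cup A'$, i.e. $A\cup A'\sim_s B\cup B'$. The only real subtlety, and the point I would be most careful about, is the bookkeeping around isolated points: one must verify that non-isolation of $O$ propagates correctly to the unions, and that the terms arising from sets in which $O$ is isolated genuinely drop out because the corresponding slices $X\cap S_r$ are empty for small $r$. Everything else is the routine $\sup$/$\inf$ manipulation encapsulated in the displayed inequality.
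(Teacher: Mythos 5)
Your proof is correct. Note that the paper does not actually prove this proposition --- it is quoted from \cite{FFW-germs} without proof --- but your argument (the elementary inequality $\delta\bigl((B\cup B')\cap S_r,(A\cup A')\cap S_r\bigr)\le\max\{\delta(B\cap S_r,A\cap S_r),\delta(B'\cap S_r,A'\cap S_r)\}$, together with the careful bookkeeping of where $O$ is isolated so that empty slices drop out and non-isolation propagates to the unions) is exactly the standard and complete way to establish it.
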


Given a closed subanalytic set $A$ and $s \geq 1$, the problem we are 
interested in is whether there exists  an algebraic subset $Y$ which is
$s$-equivalent to $A$; in this case we also
say that $Y$ approximates $A$  to order $s$. Evidently the question is 
trivially true when  $O$ is an isolated point in $A$.

Among the partial answers to the previous question that have been already achieved, we  
recall only the following one which will be used later on:

\begin{teo}\label{approx-alg}\textup{(\cite{FFW-SNS})} For any real number
$s\geq 1$ and for any closed semialgebraic set $A \subset \R^n$ of
codimension $\geq 1$, there exists an algebraic subset
$Y$ of $\R^n$ such that $A \sim_sY$.
\end{teo}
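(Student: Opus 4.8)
The plan is to reduce, via the union property, to a single basic closed piece, and then to convert the defining inequalities into one polynomial equation by an iterative ``one-sided perturbation'' whose size is controlled by a \L{}ojasiewicz inequality.

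First I would invoke the Finiteness Theorem for semialgebraic sets to write $A=\bigcup_{i=1}^{N} B_i$ as a finite union of basic closed semialgebraic sets $B_i=\{g_{i,1}\ge 0,\dots,g_{i,m_i}\ge 0\}$. Since each $B_i\subseteq A$, every $B_i$ has codimension $\ge 1$; the pieces with $O\notin B_i$ satisfy $B_i\cap S_r=\emptyset$ for small $r$ and may be discarded. By Proposition \ref{union}(2) it then suffices to produce, for each piece through $O$, an algebraic set $s$-equivalent to it. Thus I reduce to the case $A=V\cap\{g_1\ge 0,\dots,g_m\ge 0\}$, where $V=\{f_1=\dots=f_k=0\}$ is the Zariski closure of $A$, an algebraic set of the same dimension $\le n-1$.

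The heart of the argument is a single step that cuts an algebraic set by one inequality. Given $Z=\{F=0\}$ with $F\ge 0$ (for instance $F=\sum_\ell f_\ell^2$) and a polynomial $g$, I set
$$Y=\{\,F-\norm{x}^{2M} g=0\,\}.$$
Away from $O$ a real point of $Y$ forces $g\ge 0$, so $Y$ automatically lies on the correct side of the inequality, while $F=O(\norm{x}^{2M})$ on $Y$. Choosing $M$ large and using a \L{}ojasiewicz inequality $d(x,Z)\le C\,|F(x)|^{\beta}$, I get that every point of $Y\cap S_r$ is $o(r^s)$-close to $Z\cap\{g\ge 0\}$; conversely the nonnegative level sets $\{F=\delta\}$ sit at distance $O(\sqrt{\delta})$ from $Z$, so every point of $(Z\cap\{g\ge 0\})\cap S_r$ is $o(r^s)$-close to $Y$. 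Hence $Y\seq Z\cap\{g\ge 0\}$. Iterating this step over $g_1,\dots,g_m$ --- squaring the current defining polynomial before each cut to keep it nonnegative --- produces a single polynomial equation whose zero set is $s$-equivalent to $A$. The hypothesis $\operatorname{codim} A\ge 1$ guarantees $F\not\equiv 0$, so the resulting $Y$ is a proper algebraic set.

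The main obstacle I expect is the uniform control of both Hausdorff directions through the iteration: the \L{}ojasiewicz exponents degrade at each cut and near the walls $\{g_j=0\}$, so I must choose the exponents $M_j$ large enough --- depending on $s$ and on these exponents --- that the accumulated error stays $o(r^s)$, and I must check that passing from ``close to $V$'' to ``close to $V\cap\bigcap_j\{g_j\ge 0\}$'' does not cost more than $o(r^s)$ along the boundary strata. Handling the origin, where $\norm{x}^{2M}$ and the $g_j$ may vanish simultaneously and the \L{}ojasiewicz constants are worst, is the delicate point.
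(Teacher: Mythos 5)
First, a point of order: the paper does not prove Theorem \ref{approx-alg} at all --- it is imported verbatim from \cite{FFW-SNS} and used as a black box (to pass from the semialgebraic approximation of Theorem \ref{general-semi} to an algebraic one). So there is no in-paper proof to compare against; I can only assess your argument on its own terms, with the paper's Section 3 machinery as a point of reference for what the hard part actually requires.

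Your opening reduction (Finiteness Theorem plus Proposition \ref{union}(2), then writing each basic closed piece as $V\cap\{g_1\ge 0,\dots,g_m\ge 0\}$ with $V$ its Zariski closure) is fine, and the single-inequality step is the right classical idea: $Y=\{F-\norm{x}^{2M}g=0\}$ with $F=\sum_\ell f_\ell^2$ generalizes the model $\{y=0,\,x\ge 0\}\sim_s\{y^2=x^{2M+1}\}$, and both directions of $Y\sim_s V\cap\{g\ge 0\}$ can be pushed through --- the forward one via a \L ojasiewicz inequality for $F$ \emph{together with} regular separation of $V$ and $\{g\ge 0\}$ (being close to $V$ and satisfying $g\ge 0$ does not by itself put you close to $V\cap\{g\ge 0\}$), the backward one via an intermediate-value argument for $F-\norm{x}^{2M}g$ along a short path from $a\in V\cap\{g\ge 0\}$ to a point where $d(\cdot,V)\gtrsim\norm{a}^{2M/\alpha}$. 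Your stated justification for the backward direction (``the level sets $\{F=\delta\}$ sit at distance $O(\sqrt{\delta})$ from $Z$'') is not correct in general --- the exponent is the \L ojasiewicz exponent of $F$, not $1/2$ --- but that is repairable.

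The genuine gap is the iteration. After the first cut you only know $Y_1\sim_s V\cap\{g_1\ge 0\}$, and $s$-equivalence is \emph{not} compatible with intersections (Proposition \ref{union} is about unions only). Concretely, for the second step the inclusion $V\cap\{g_1\ge 0,g_2\ge 0\}\leq_s Y_2$ requires, near each point $a$ of the corner stratum where $g_2$ (and possibly $g_1$) vanishes, a point of $Y_1$ lying on the correct side of $g_2$ from which to run the sign-change argument for $F_1^2-\norm{x}^{2M_2}g_2$; the value of $F_1^2-\norm{x}^{2M_2}g_2$ at $a$ itself has uncontrolled sign, and the mere existence of \emph{some} point of $Y_1$ within $o(\norm{a}^s)$ of $a$ does not give one with $g_2\ge 0$. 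This is exactly where the authors' own machinery (Lemma \ref{trunc} and Theorem \ref{general-semi}) spends all its effort, and their resolution is structurally different from yours: they never encode the inequalities into the equation. Instead they split off the bad set $b(A)\cup\Sigma(f)$ (the union of the walls $\{g_i=0\}\cap A$ and the singular locus), which has strictly smaller dimension and is handled by induction on dimension; off that set all inequalities are \emph{strict} and hence stable under truncation (Lemma \ref{Claim1}), and nearby zeros of the perturbed equation on the correct side are produced by a quantitative submersion estimate ($\Lambda f$), not by an IVT for a single scalar function. Without an analogue of that induction on dimension to dispose of the boundary strata, your iteration stalls precisely at the point you flag as ``delicate''; with it, your one-inequality construction would only be needed in the interior of each face, where it is essentially unnecessary.
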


The following definition introduces a geometric tool which is very useful to test the $s$-equivalence of two subanalytic sets:

\begin{defi}  Let $A$ be  a closed subanalytic subset
of $\R^n$, $O\in A$; for any real  $ \sigma>1$, 
we will  refer to the set
$$\mathcal H(A,\sigma)= \{ x\in \R^n \ |\  d(x,A)<  \|x\|^\sigma\}$$ 
as the horn-neighborhood with center $A$ and exponent $ \sigma$. 
\end{defi}

Note that, if $O$ is isolated in $A$, then $\mathcal H(A,\sigma)=\emptyset$ near $O$.

\begin{prop}\label{horn-lemma}\textup{(\cite{FFW-germs})} Let $A,B$ be closed subanalytic subsets
of $\R^n$ with $O\in A\cap B$ and let $s \geq 1$. Then
$A\leq_s B$ if and only if there exists $\sigma >s$ such that $A \setminus \{O\} \subseteq \mathcal H(B,\sigma)$.  
\end{prop}

The following technical result suggests that horn-neighborhoods can be used to modify 
a subanalytic set producing subanalytic sets $s$-equivalent to the original one:

\begin{lem}\label{XY} Let $X\subset Y \subset \R^n$ be closed subanalytic sets such that $O\in X$ and let $s\ge 1$. Then:
\begin{enumerate}
\item for any $\sigma >s$ we have $Y \seq Y \cup \calH (X,\sigma)$;
\item if $\ol{Y \setminus X} =Y$, there exists $\sigma >s$ such that 
$Y \setminus \calH (X,\sigma) \seq Y.$
\end{enumerate}
\end{lem}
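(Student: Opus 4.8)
The plan is to reduce both statements to the horn-neighborhood criterion of Proposition \ref{horn-lemma}, exploiting the elementary monotonicity $\calH(X,\sigma)\subseteq\calH(Y,\sigma)$ that follows from $X\subseteq Y$ (which gives $d(x,Y)\le d(x,X)$). For (1), one inclusion is free: since $Y\subseteq Y\cup\calH(X,\sigma)$ we get $Y\leq_s Y\cup\calH(X,\sigma)$. For the reverse I would pass to the closure $\ol{Y\cup\calH(X,\sigma)}=Y\cup\ol{\calH(X,\sigma)}$ and note that every $x\in\ol{\calH(X,\sigma)}$ satisfies $d(x,X)\le\norm{x}^\sigma$. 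Fixing $\sigma'$ with $s<\sigma'<\sigma$, for $\norm{x}<1$ we have $\norm{x}^\sigma<\norm{x}^{\sigma'}$, whence $d(x,Y)\le d(x,X)<\norm{x}^{\sigma'}$, while points of $Y$ trivially lie in $\calH(Y,\sigma')$. Thus $\ol{Y\cup\calH(X,\sigma)}\setminus\{O\}\subseteq\calH(Y,\sigma')$ as germs at $O$, and Proposition \ref{horn-lemma} yields $Y\cup\calH(X,\sigma)\leq_s Y$; combining the two directions gives the asserted $s$-equivalence.

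For (2), write $W=Y\setminus\calH(X,\sigma)=\{z\in Y: d(z,X)\ge\norm{z}^\sigma\}$, which is closed because $\calH(X,\sigma)$ is open. Since $W\subseteq Y$ we get $W\leq_s Y$ for free, so the entire content is the reverse inequality $Y\leq_s W$, i.e. producing, for each $y\in Y$ near $O$, a point $w\in W$ with $\norm{y-w}$ vanishing of order $>s$. The hypothesis $\ol{Y\setminus X}=Y$ is precisely what makes this possible: it forbids any point of $Y$ from having a whole neighborhood (in $Y$) swallowed by $X$, so one can always ``pull away'' from $X$ while staying inside $Y$.

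To quantify the pull-away I would introduce the subanalytic function
$$\Phi(y,t)=\max\{\,d(z,X): z\in Y,\ \norm{z-y}\le t\,\},\qquad y\in Y\cap\ol B_\delta,\ 0\le t\le\delta.$$
Density of $Y\setminus X$ forces $\Phi(y,t)>0$ whenever $t>0$, so the zero-set of $\Phi$ is contained in $\{t=0\}$; the \L ojasiewicz inequality then supplies uniform constants $c>0$ and $k>0$ with $\Phi(y,t)\ge c\,t^{k}$ near $O$. In words: moving a distance $\le t$ inside $Y$ one can reach a point whose distance to $X$ is at least $c\,t^{k}$. Given $s$, I would now fix an exponent $\beta>s$ and then choose $\sigma>\max\{s,\beta k\}$. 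For $y\in Y$ close to $O$, applying the estimate with $t=\norm{y}^{\beta}$ produces $w\in Y$ with $\norm{w-y}\le\norm{y}^{\beta}$ and $d(w,X)\ge c\,\norm{y}^{\beta k}$; since $\tfrac12\norm{y}\le\norm{w}\le2\norm{y}$ and $\beta k<\sigma$, for $\norm{y}$ small we get $d(w,X)\ge\norm{w}^{\sigma}$, that is $w\in W$. Hence $d(y,W)\le\norm{y}^{\beta}$ with $\beta>s$, and Proposition \ref{horn-lemma} gives $Y\leq_s W$, completing (2). One disposes separately of the trivial case where $O$ is isolated, and records that $O$ remains non-isolated in $W$ (indeed $O\in W$, and the construction yields points of $W$ arbitrarily close to $O$).

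The main obstacle is the uniform \L ojasiewicz estimate $\Phi(y,t)\ge c\,t^{k}$: one must check that $\Phi$ is genuinely subanalytic (being a fibrewise supremum of the subanalytic function $d(\cdot,X)$ over the relatively compact slices $Y\cap\ol B(y,t)$) and that the inclusion of zero-sets, and hence the exponent $k$, is uniform on a neighborhood of $O$. Once this is granted the remainder is bookkeeping; the conceptual heart is that the freedom to take $\sigma$ as large as we wish absorbs the a priori arbitrary finite exponent $k$ dictated by the geometry of the pair $(X,Y)$.
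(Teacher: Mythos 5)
Your part (1) is essentially the paper's argument: the whole content is the inclusion $Y\cup\calH(X,\sigma)\subseteq\calH(Y,\sigma)$, which follows from $d(x,Y)\le d(x,X)$, combined with Proposition \ref{horn-lemma}; your extra step of passing to the closure and to an intermediate exponent $\sigma'$ is harmless and if anything slightly more careful than the paper's one-line version.

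Part (2) is where you genuinely diverge. The paper does not construct the nearby point directly: it introduces the ``truncated cone'' neighborhood $\calU (X,q )= \{x \ | \ \exists\, y\in X ,\norm{x}=\norm{y}, \norm{x-y} < \norm{x}^q\}$, quotes Corollary 2.6 of \cite{FFW-SNS} to get $Y\setminus\calU(X,q)\seq Y$, and then uses regular separation of the subanalytic sets $X$ and $Y\setminus\calU(X,q)$ (which meet only at $O$) to produce an exponent $\beta$ with $Y\cap\calH(X,\beta)\subseteq\calU(X,q)$. Your route replaces the citation by a self-contained ``pull-away'' construction via the parametrized maximum $\Phi(y,t)$, and the quantifier order (the \L ojasiewicz exponent $k$ depends only on the pair $(X,Y)$, after which $\beta>s$ and then $\sigma>\beta k$ are chosen) is handled correctly; this is a legitimate and arguably more transparent proof. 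One caveat you did not flag: $\Phi$ is in general only upper semicontinuous --- the maximizer may sit on the sphere $\norm{z-y}=t$ and disappear under perturbation of $(y,t)$ --- so Proposition \ref{Loj}, which requires the minorized function to be continuous, does not apply verbatim. The estimate is nonetheless true: for any sequence $(y_i,t_i)$ with $y_i\to y_0$ and $t_i\to t_0>0$ one has $\ol B(y_i,t_i)\supseteq\ol B(y_0,t_0/2)$ eventually, hence $\Phi(y_i,t_i)\ge\Phi(y_0,t_0/2)>0$ by density of $Y\setminus X$; this verifies directly the hypothesis $E\cap(\{0\}\times\R)\subseteq\{(0,0)\}$ of the image-set lemma of \L ojasiewicz quoted inside the proof of Proposition \ref{Loj}, which then yields $\Phi(y,t)\ge c\,t^{k}$. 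With that repair your argument closes.
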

\begin{proof} (a) Since $Y \cup \calH(X,\sigma)  \subseteq \calH (Y, \sigma)$, 
by Proposition \ref{horn-lemma} for any $\sigma >s$ we have that
$Y \cup \calH(X,\sigma) \leq_s Y$ and hence $Y \cup \calH (X,\sigma) \seq Y$.

(b) Let $\mathcal U (X,q )= \{x\in\R^n \ | \ \exists\, y\in X  ,\|x\|=\|y\|, 
\|x-y\| < \|x\|^q\}$.

Arguing as in \cite[Corollary 2.6]{FFW-SNS}, there exists $q$ such that 
$Y \setminus \calU (X,q) \seq Y$. Since $X$ and $Y \setminus \calU (X,q)$ 
are subanalytic sets  and meet only in $O$, they are regularly situated, 
i.e. there exists $\beta$ such that $d(x,X) + d(x, Y \setminus \calU (X,q)) > \|x\|^{\beta}$ 
for all $x$ near $O$. Then $\calH (X, \beta) \subseteq \calU (X,q)$ and hence taking 
$\sigma > \max\{\beta, s\}$ we have that $Y \setminus \calH (X,\sigma) \seq Y $.
\end{proof}

 Another essential tool will be \L ojasiewicz' inequality, which we will use in the following slightly modified version:

\begin{prop}\label{Loj} Let $A$ be a compact subanalytic subset of
$\R^n$. Assume $f$ and $g$ are subanalytic functions defined on $A$
such that $f$ is continuous, $V(f) \subseteq V(g)$, $g$  is continuous at 
the points of $V(g)$ and such that $|g|<1$ on $A$. Then there exists
a positive constant $\alpha$ such that $|g|^\alpha \leq |f|$ on $A$ and $|g|^\alpha <
|f|$ on $A\setminus V(f)$.
\end{prop}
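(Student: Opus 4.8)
The plan is to deduce the statement from the classical \L ojasiewicz inequality in two stages: first produce a comparison $|g|^N\le C|f|$ with an unspecified constant $C$ and exponent $N$, and then absorb the constant and sharpen to the strict inequality by exploiting the hypothesis $|g|<1$.

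For the first stage I would not apply \L ojasiewicz to $f,g$ directly — since $g$ is only assumed continuous at the points of $V(g)$ — but rather pass to the plane. Consider the bounded subanalytic set
$$\Theta = \{(u,v)\in\R^2 \mid \exists\, x\in A,\ u=|f(x)|,\ v=|g(x)|\}$$
and its closure $\ol\Theta$, which is compact and subanalytic. The key point is that $\ol\Theta$ meets the axis $\{u=0\}$ only at the origin: if $(0,v)\in\ol\Theta$, choose $x_k\in A$ with $|f(x_k)|\to 0$ and $|g(x_k)|\to v$; by compactness $x_k\to x_0$ along a subsequence, continuity of $f$ gives $x_0\in V(f)\subseteq V(g)$, and continuity of $g$ at the point $x_0\in V(g)$ forces $|g(x_k)|\to g(x_0)=0$, so $v=0$. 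This is exactly where the hypothesis on $g$ is used. Applying the \L ojasiewicz inequality on the compact subanalytic set $\ol\Theta$ to the two coordinate projections, whose common zero set on $\ol\Theta$ is reduced to the origin, yields constants $C>0$ and $N\ge 1$ with $v^N\le C\,u$ on $\ol\Theta$, that is $|g|^N\le C|f|$ on $A$.

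For the second stage set $\rho=\sup_A|g|$; since $A$ is compact and $|g|<1$ we have $\rho<1$. For any $\alpha\ge N$ and any $x\in A$ we may write
$$|g(x)|^\alpha = |g(x)|^{\alpha-N}\,|g(x)|^N \le \rho^{\,\alpha-N}\,C\,|f(x)|.$$
Because $\rho<1$, the coefficient $\rho^{\alpha-N}C$ tends to $0$ as $\alpha\to\infty$, so I may fix a real $\alpha\ge N$ large enough that $\rho^{\alpha-N}C<1$. Then $|g|^\alpha\le|f|$ on all of $A$, and the inequality is strict at every point where $|f|>0$, i.e. on $A\setminus V(f)$; on $V(f)\subseteq V(g)$ both sides vanish and equality holds. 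I expect the genuine difficulty to lie in the first stage, namely obtaining a \emph{uniform} comparison $|g|^N\le C|f|$ when $g$ fails to be globally continuous; the passage to $\ol\Theta$ is precisely the device that lets the weakened continuity assumption suffice, after which the removal of the constant in the second stage is routine.
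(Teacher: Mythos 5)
Your proposal is correct and follows essentially the same route as the paper: both pass to the closure of the image of $A$ under $(|f|,g)$ (you use $(|f|,|g|)$, an immaterial change) in the plane, use the continuity of $f$ and the continuity of $g$ at points of $V(g)$ to show this closure meets the axis $\{u=0\}$ only at the origin, invoke \L ojasiewicz there to get $|g|^N\le C|f|$, and then absorb the constant and obtain strictness by raising the exponent, using $\sup_A|g|<1$. No gaps.
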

\begin{proof} The result will be obtained by adapting the proof given by 
\L ojasiewicz under the stronger hypothesis that $g$ is continuous on $A$ 
(see \cite[Th\'eor\`eme 1]{L}); in that paper he used the following lemma (\cite[Lemma 4]{L}): 

 if $E\subset [0, \infty) \times \R$ is a compact semianalytic  subset 
of $\R^2$ such that $E\cap (\{0\}\times \R) \subseteq \{(0,0)\}$, then there exist positive 
constants $c, \alpha$ such that $E \subseteq \{(x,y)\in \R^2 \ |\ |y|^\alpha \leq c|x|\}$. 
\smallskip

The map $\Phi=(|f|,g)\colon A \to \R^2$ is subanalytic and bounded;  hence 
$\Phi(A)$ is a subanalytic subset of  $\R^2$ and therefore semianalytic (\cite[Proposition 2]{L}).
Then $E=\ol{\Phi(A)}$ is a compact semianalytic subset of $[0, \infty) \times \R$. 

We have that $E\cap (\{0\}\times \R) \subseteq \{(0,0)\}$: namely, if $(0,y_0)\in E$, 
then there exists a sequence $\{a_i\}\subset  A$ such that $\lim _{i\to \infty} \Phi(a_i)=(0,y_0)$ 
with $a_i$ converging to $a_0\in A$. By continuity $f(a_0)=0$ and hence $g(a_0)=0$. 
By the continuity of $g$ at $a_0$, we have that $y_0=g(a_0)=0$.

So $E$ fulfills the hypotheses of the lemma recalled above and therefore 
there exist  positive constants $c, \alpha$ such that $|g|^\alpha \leq c |f|$ on $A$.

Since  $|g|<1$, increasing $\alpha$ if necessary we can  obtain the thesis.
\end{proof}

\section{Main theorems}

This section is devoted to the proof of the local approximation theorem for semianalytic sets.

Since $s$-equivalence depends only on the set-germs at $O$, all the sets we will work 
with will be considered as subsets of a suitable open ball $\Omega$ centered at $O$; 
we will shrink such a ball whenever necessary without mention.

\begin{defi}
Let $A$ be a closed semianalytic subset of $\Omega$. We will say that 
$A$ admits a good presentation  if the minimal analytic variety  $V_A$containing $A$  
is irreducible and  there exist analytic functions $ f_1, \ldots, f_p$  
which generate  the ideal $I(V_A)$ 
  and $g_1, \dots , g_l$ analytic functions on $\Omega$ such that 
$$A=\{x \in \Omega \ |\ f(x)=O, g_i(x)\geq 0, i=1,\dots,l\}.$$
\end{defi}

We  start with a preliminary result concerning a way to decompose and present  semianalytic sets:

\begin{lem}\label{presentation} Let $A$ be a closed semianalytic subset of $\Omega$ with 
 $\dim_O A=d>0$.
Then there exist closed semianalytic sets $\Gamma_1, \ldots, \Gamma_r,\Gamma '$ such that
\begin{enumerate}
\item $A= \left(\bigcup_{i=1}^r \Gamma_i\right) \cup \Gamma '$
\item for each i, $\dim_O \Gamma_i =d$ and $\Gamma_i$ admits a good presentation
\item $\dim \Gamma ' < d$.
\end{enumerate}
\end{lem}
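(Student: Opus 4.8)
The plan is to build the pieces $\Gamma_i$ as closed basic semianalytic subsets of the irreducible components of the minimal analytic variety of $A$, and to dump every lower-dimensional remainder into $\Gamma'$. The whole difficulty will be that a \emph{good presentation} forces the defining inequalities to be non-strict, whereas the natural top-dimensional pieces of $A$ are cut out by strict inequalities.

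First I would study the minimal analytic variety $V_A$ containing $A$. Since $\dim_O A=d$ we have $\dim_O V_A=d$; write its decomposition into irreducible analytic components $V_A=W_1\cup\cdots\cup W_N$ and relabel so that $W_1,\ldots,W_r$ are the components of dimension $d$ while $W_{r+1},\ldots,W_N$ have dimension $<d$. The points of $A$ lying in the lower-dimensional components $W_{r+1},\ldots,W_N$, in the pairwise intersections $W_i\cap W_j$ with $i\neq j$, or in the singular loci $\mathrm{Sing}(W_i)$ all form a set of dimension $<d$, and I would place these into $\Gamma'$ from the outset. It then suffices, for each $i=1,\ldots,r$, to describe the part of $A$ lying in the open $d$-dimensional manifold $M_i:=W_i^{\mathrm{reg}}\setminus\bigcup_{j\neq i}W_j$, which is dense in $W_i$, and to exhibit it, modulo a set of dimension $<d$, as a finite union of sets admitting a good presentation.

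Next I would invoke the structure theorem for semianalytic sets: near $O$ there are analytic functions $h_1,\ldots,h_m$ on $\Omega$ such that $A$ is a finite union of sign-cells $C_\varepsilon=\{x:\operatorname{sign}h_t(x)=\varepsilon_t,\ t=1,\ldots,m\}$ with $\varepsilon\in\{-1,0,1\}^m$. The cells of dimension $<d$ go to $\Gamma'$. For a $d$-dimensional cell, every $h_t$ vanishing identically on $W_i$ contributes a trivial equality on $M_i$, so $C_\varepsilon\cap M_i$ is an open basic set $M_i\cap\{\varepsilon_{t_1}h_{t_1}>0,\ldots,\varepsilon_{t_k}h_{t_k}>0\}$, contained in $A$. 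Since $A$ is closed, the closure of each such open piece is again contained in $A$; generating $I(W_i)$ by analytic functions $f^{(i)}_1,\ldots,f^{(i)}_{p_i}$ and taking $g=\varepsilon_t h_t$, the natural candidate is $W_i\cap\{\varepsilon_t h_t\ge 0\}$, whose minimal variety is the irreducible $W_i$ (the open piece being Zariski-dense in $W_i$) and whose dimension is $d$.

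The main obstacle is exactly the replacement of the strict inequalities by non-strict ones: the closed basic set $W_i\cap\{\varepsilon_t h_t\ge 0\}$ may contain extraneous boundary points, where some $h_t=0$, that do \emph{not} belong to $A$, and these cannot be absorbed into $\Gamma'$ since the required conclusion is the exact equality $A=\bigl(\bigcup_i\Gamma_i\bigr)\cup\Gamma'$. I expect to control this as follows. On the irreducible $d$-dimensional $W_i$ any $h_t$ not identically zero has zero set of dimension $<d$, so the discrepancy between $W_i\cap\{\varepsilon_t h_t\ge 0\}$ and the open cell $W_i\cap\{\varepsilon_t h_t>0\}$ lies in $\bigcup_t(\{h_t=0\}\cap W_i)$ and hence has dimension $<d$; the extraneous part is therefore a finite union of lower-dimensional loci. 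To remove it while keeping a closed basic presentation, I would apply the Łojasiewicz-type estimate of Proposition \ref{Loj} to obtain regular separation between the closure of the open cell and the extraneous locus, producing finitely many further analytic inequalities $g_{l+1}\ge 0,\ldots$ that hold on the cell closure but exclude the whiskers, so that the resulting $\Gamma_i=\{f^{(i)}=0,\ g\ge 0\}$ is a closed basic subset of $A$ of dimension $d$ with good presentation. Every lower-dimensional set produced in the process is collected into $\Gamma'$, and a final dimension count, using that each discarded locus sits in a proper analytic subvariety of some $W_i$ or in a lower-dimensional component of $V_A$, yields $\dim\Gamma'<d$ together with the equality $A=\bigl(\bigcup_{i=1}^{r}\Gamma_i\bigr)\cup\Gamma'$.
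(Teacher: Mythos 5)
Your overall architecture coincides with the paper's: decompose the minimal variety $V_A$ into irreducible components, write the part of $A$ in each component as a finite union of basic pieces, observe that each $d$-dimensional piece has the irreducible component as its minimal analytic variety (so its equations may be replaced by generators of the ideal of that component, which is exactly what a good presentation requires), and collect everything of dimension $<d$ into $\Gamma'$. The difference lies in how the basic pieces are produced. The paper invokes, without further ado, the standard decomposition of a \emph{closed} semianalytic set as a locally finite union of \emph{closed} basic sets $\{h_1=0,\dots,h_q=0,\ g_1\ge 0,\dots,g_l\ge 0\}$ (a theorem of \L ojasiewicz); with that in hand there is nothing to repair, since every piece already has non-strict inequalities. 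You instead start from the sign-cell description with strict inequalities and try to convert $>$ into $\ge$ afterwards.

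That conversion is where your argument has a genuine gap. The set you must realize as a closed basic subset of $A$ is $\overline{C_\varepsilon\cap M_i}$, and you correctly identify the obstruction: $W_i\cap\{\varepsilon_t h_t\ge 0\}$ may contain lower-dimensional ``whiskers'' not lying in $A$, which cannot be shunted into $\Gamma'$ because the lemma demands the exact equality $A=\bigl(\bigcup_i\Gamma_i\bigr)\cup\Gamma'$ with each $\Gamma_i\subseteq A$. But your proposed repair --- apply Proposition~\ref{Loj} to get regular separation between the cell closure and the whiskers and thereby ``produce finitely many further analytic inequalities'' --- does not work as stated: regular separation yields inequalities between subanalytic distance functions, not the additional \emph{analytic} sign conditions $g_{l+1}\ge 0,\dots$ that a good presentation requires, and you never specify which analytic functions would do the separating. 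Identifying such functions is precisely the content of the closed-basic (finiteness-type) decomposition theorem for closed semianalytic sets, which has a genuinely nontrivial proof by induction on dimension; it is not a corollary of Proposition~\ref{Loj}. The fix is to quote that theorem at the outset and apply it to each $W_i=A\cap V_i$, as the paper does; the remainder of your argument (minimal variety of each $d$-dimensional piece equals $V_i$, replacement of its equations by generators of $I(V_i)$, absorption of the lower-dimensional pieces into $\Gamma'$) then goes through and matches the paper's proof.
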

\begin{proof} Let $V_A$ be the minimal analytic variety containing $A$ (in particular $\dim_O V_A =d$). Let  
$V_1 \cup \ldots \cup V_m$ be the decomposition of $V_A$ into irreducible components. 
Then $A = W_1 \cup \ldots \cup W_m$ where $W_i = A \cap V_i$. Then $V_i$ is the minimal 
analytic variety containing $W_i$ and $\dim_O V_i = \dim_O W_i$. 

Each $W_i$ is a finite union of sets  of the kind  $\Gamma=\{h_1=0, \ldots, h_q=0, g_1 \geq 
0, \ldots, g_l\geq 0\}$.

Let $\Gamma '$ be the union, letting $i$ vary,  of the $\Gamma$'s having dimension less than $d$.
 
For any $\Gamma\subseteq V_i$ having dimension $d$, $V_i$ is the minimal analytic variety 
containing $\Gamma$.  It follows that $\Gamma=\{f_1=0,  \ldots, f_p=0, g_1 \geq 0, \ldots, g_l\geq 0\}$ 
where $f_1,  \ldots, f_p$ are generators of the ideal $I(V_i)$. Thus we can take as 
$\Gamma_1, \ldots, \Gamma_r$ these latter $\Gamma$'s (letting $i$ vary) suitably indexed.
\end{proof} 

\begin{notation}
Let $g_1, \dots , g_l$ be analytic functions on $\Omega$ and let $f= (f_1, \ldots, f_p) \colon \Omega \to \R^p$ 
be an analytic map. If $A=\{x \in \Omega\ |\ f(x)=O, g_i(x)\geq 0, i=1,\dots,l\},$ we will use the following notation:
\begin{enumerate}
\item $A_i =\{x\in \Omega \ |\  f(x)=O, g_i(x)\ge 0\}$ for $i=1, \dots, l$ \quad (so that $A=\bigcap A_i$)
\item $b(A)=\bigcup_{i=1}^l (V(g_i) \cap A)$.
\end{enumerate}
\end{notation}
\smallskip

\begin{lem}\label{Claim2} Consider the closed  semianalytic set 
$$A=\{x \in \Omega \ |\ 
f(x)=O, g_i(x)\geq 0, i=1,\dots,l\},$$ where $f\colon \Omega \to \R^p$ is an analytic map and 
$g_1, \dots , g_l$ are analytic functions on $\Omega$. Assume that $O\in A$. 
Let $\sigma$ be a real positive number and let $H\subseteq \R^n$ be an open subanalytic set such that 
$H\supseteq \mathcal H(b(A),\sigma)$. 
Then there exists $\eta$ such that, for each $x\in V(f) \setminus (A \cup H)$, 
there exists  $i$ so that $x\not\in \calH(A_i,\eta)$.
\end{lem}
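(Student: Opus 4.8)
The plan is to reduce the statement to a single \L ojasiewicz inequality comparing two continuous subanalytic functions. Introduce $\phi(x)=\max_{1\le i\le l} d(x,A_i)$ and $\psi(x)=d(x,b(A))$; both are continuous and subanalytic on $\Omega$. Since $A=\bigcap_i A_i$ and each $A_i$ is closed, one has $\phi(x)=0$ if and only if $x\in A$, so $V(\phi)=A$. The target conclusion ``there exists $i$ with $d(x,A_i)\ge\|x\|^\eta$'' is equivalent to $\phi(x)\ge\|x\|^\eta$; and since for $x\notin H\supseteq\calH(b(A),\sigma)$ we have $\psi(x)=d(x,b(A))\ge\|x\|^\sigma$, it suffices to produce $\alpha>0$ with $\psi^\alpha\le\phi$ on the relevant region and then put $\eta=\sigma\alpha$. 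Indeed the index $i$ realizing the maximum then satisfies $d(x,A_i)=\phi(x)\ge\psi(x)^\alpha\ge\|x\|^{\sigma\alpha}=\|x\|^\eta$, i.e. $x\notin\calH(A_i,\eta)$.

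The delicate point is the choice of domain on which to apply \L ojasiewicz, because $\psi^\alpha\le\phi$ cannot hold on all of $V(f)$: interior points of $A$ have $\phi=0$ but $\psi>0$. First I would restrict to the compact subanalytic set $C=\ol{V(f)\setminus A}\cap\ol\Omega$, which still contains every $x\in V(f)\setminus(A\cup H)$. On $C$ the zero set of $\phi$ is $C\cap A$, and the heart of the argument is the frontier inclusion $C\cap A\subseteq b(A)$. To prove it, take $x\in C\cap A$ and write $x=\lim_n x_n$ with $x_n\in V(f)\setminus A$; each $x_n$ violates some constraint, say $g_{i_n}(x_n)<0$, and passing to a subsequence I may fix one index $i$ with $g_i(x_n)<0$ for all $n$. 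Then $g_i(x)=\lim_n g_i(x_n)\le 0$, while $g_i(x)\ge 0$ because $x\in A$; hence $g_i(x)=0$ and $x\in V(g_i)\cap A\subseteq b(A)$.

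With this inclusion in hand, $V(\phi)\cap C\subseteq V(\psi)\cap C$, so Proposition \ref{Loj} applies on $C$ with $\phi$ and $\psi$ playing the roles of its $f$ and $g$ (both are continuous, and $|\psi|<1$ after shrinking $\Omega$), yielding $\alpha>0$ with $\psi^\alpha\le\phi$ on $C$. Setting $\eta=\sigma\alpha$ and combining with $\psi(x)\ge\|x\|^\sigma$ for $x\notin H$ gives the claim exactly as above. I expect the main obstacle to be precisely the frontier inclusion $\ol{V(f)\setminus A}\cap A\subseteq b(A)$, together with the bookkeeping needed to secure the \L ojasiewicz hypotheses on $C$; the degenerate situation $O\notin b(A)$ should be handled separately, since then all $g_i(O)>0$ and $V(f)\setminus A$ is empty near $O$, making the statement vacuous.
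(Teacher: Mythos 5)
Your proof is correct, but it takes a genuinely different route from the paper's. Both arguments ultimately rest on the same geometric fact --- that one cannot pass from $V(f)\setminus A$ into $A$ without meeting $b(A)$ --- but they package it differently. The paper first applies \L ojasiewicz on all of $\Omega$ to get $\sum_i d(x,A_i)\geq d(x,A)^\alpha$, and then must relate the \emph{Euclidean} distance $d(x,A)$ to $d(x,b(A))$; for this it passes to the \emph{geodesic} distance $d_g$ on $V(f)$ (for which $d_g(x,A)=d_g(x,b(A))$ holds on $V(f)\setminus A$ by the crossing argument), invoking the Kurdyka--Orro theorem to produce a continuous subanalytic surrogate $\Delta$ for $d_g$ and a second \L ojasiewicz inequality $d(x,A)\geq\Delta(x,A)^\mu$. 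You instead encode the crossing fact as the frontier inclusion $A\cap\ol{V(f)\setminus A}\subseteq b(A)$ (your sign/limit argument for this is exactly right, and it is the same argument that justifies the paper's identity $d_g(x,A)=d_g(x,b(A))$), and this inclusion is precisely what legitimizes a \emph{single} application of Proposition \ref{Loj} to $\max_i d(\cdot,A_i)$ versus $d(\cdot,b(A))$ on the compact set $\ol{V(f)\setminus A}$. Your key move --- restricting the domain to $\ol{V(f)\setminus A}$ rather than all of $V(f)$, where the comparison would fail at relative interior points of $A$ --- is what the paper achieves more laboriously via the geodesic detour. The net effect is a shorter proof that avoids the external input from \cite{KO}; the only points to tidy up are the ones you already flag (shrinking $\Omega$ so that the domain is compact and $|\psi|<1$, taking $\alpha\geq 1$ so that $\eta=\sigma\alpha>1$, and disposing of the vacuous case $b(A)=\emptyset$, where $V(f)\setminus A$ is closed and misses a neighborhood of $O$).
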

\begin{proof}
Since the functions $\sum_{i} d(x,A_i)$ and $d(x,A)$ are subanalytic and vanish exactly on $A$, 
by Proposition \ref{Loj} there exists $\alpha >0$ such that, for any $x$,
$$\sum_{i} d(x,A_i) \geq d(x,A)^\alpha.$$

Let $d_g$ denote the geodesic distance on $V(f)$. 

If $x\in V(f) \setminus A$, we have  $d_g(x,A)=d_g (x,b(A))$. In a suitable closed 
ball centered at $O$ we can assume that $V(f)$ is connected; hence,  by a result of 
Kurdyka and Orro (\cite{KO}) for any $\epsilon >0$ there exists a subanalytic distance $\Delta (x,y)$ on $V(f)$ such that 
$$\forall x,y\in V(f) \qquad 0 \leq \Delta(x,y) \leq d_g(x,y) \leq (1+ \epsilon) \Delta(x,y).$$ 
Then, if we take for instance $\epsilon =1$, 
$$\forall x \in V(f) \qquad 0 \leq \Delta(x,A) \leq d_g(x,A) \leq 2 \Delta(x,A)  $$ 
and so 
the subanalytic function $\Delta(x,A)$ is continuous at each point of $A$. Hence  by 
Proposition \ref{Loj}  there exists $\mu >0$ such that, for any $x$ in $V(f)$,  
$$d(x,A) \geq \Delta(x,A)^\mu$$
and so 
$$\sum_{i} d(x,A_i) \geq \Delta(x,A)^{\mu \alpha}
\geq \left( \frac {d_g(x, A)}2 \right)^{\mu \alpha}.
$$
Moreover for any $x \in V(f) \setminus  (A\cup H)$ we have that 
$$d_g(x, A) = d_g(x, b(A)) \geq d(x,b(A)) \geq \|x\|^{\sigma }.$$

Let us show that the thesis holds choosing $\eta > \sigma \mu \alpha$.

If, for a contradiction, any neighborhood of $O$ contains a point  
$x\in \bigcap _i \calH(A_i,\eta)\cap (V(f) \setminus  (A\cup H)$, then  we have that
$$\frac1{2^{\mu \gamma}}\|x\|^{\sigma \mu \alpha} \leq \sum_{i=1}^l d(x,A_i)  \leq l \|x\|^\eta$$
which is impossible when $x$ tends to $O$.
\end{proof}

For any analytic map $\psi$ defined in a neighborhood of $O$, we will denote by 
$T^k \psi(x)$ the polynomial map whose components are the Taylor polynomials of
order $k$ at $O$ of the components of $\psi$.

\begin{lem}\label{Claim1} Let $\varphi$ an analytic function on $\Omega$ such that 
$\varphi(O)=0$. Let $X$ be a closed semianalytic subset of  $\Omega$, $O\in X$.  
Then for any real positive $\theta$ there exists $\alpha >0$ such that,  
for all  integers $k>\alpha$, the function $T^k \varphi$ has the same sign as  
$\varphi$ on $X \setminus \left( \calH(X \cap V(\varphi),\theta)\cup \{O\}\right)$.
\end{lem}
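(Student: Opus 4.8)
The plan is to combine a {\L}ojasiewicz-type lower bound for $|\varphi|$ away from its zero set with the classical estimate on the Taylor remainder $\varphi-T^k\varphi$, and then to show that for $k$ large the remainder is dominated by $|\varphi|$ on the relevant region, which forces $T^k\varphi$ and $\varphi$ to have the same sign there.

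First I would set $Z=X\cap V(\varphi)$ and, after shrinking $\Omega$ to a closed ball on which $\varphi$ is analytic and whose diameter is $<1$, regard $X$ as a compact subanalytic set. Applying Proposition \ref{Loj} with $f=|\varphi|$ and $g=d(\cdot,Z)$ (both continuous and subanalytic, with $V(f)=V(g)=Z$), I obtain an exponent $\beta>0$ such that $d(x,Z)^{\beta}\le|\varphi(x)|$ on $X$. If now $x\in X\setminus(\calH(Z,\theta)\cup\{O\})$, the definition of the horn-neighborhood gives $d(x,Z)\ge\|x\|^{\theta}$, hence
$$|\varphi(x)|\ \ge\ \|x\|^{\theta\beta}\ >\ 0.$$
In particular $\varphi$ does not vanish on this set, so it suffices to prove that $|\varphi(x)-T^k\varphi(x)|<|\varphi(x)|$ there: this confines $T^k\varphi(x)$ to the open interval of radius $|\varphi(x)|$ about $\varphi(x)$, which excludes $0$, forcing the same sign.

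For the remainder I would expand $\varphi=\sum_\nu c_\nu x^\nu$ and estimate the monomials by powers of $\|x\|$. Working on a ball of radius $\rho<\rho'$, where $\rho'$ lies inside the domain of absolute convergence, the tail $\sum_{|\nu|>k}c_\nu x^\nu$ is bounded by a geometric series, yielding an estimate of the form $|\varphi(x)-T^k\varphi(x)|\le M_k\|x\|^{k+1}$ with $M_k\le C\rho'^{-(k+1)}$. Comparing with the lower bound, on the set in question I need $M_k\|x\|^{k+1}<\|x\|^{\theta\beta}$, that is $M_k\|x\|^{k+1-\theta\beta}<1$. Choosing $\alpha\ge\theta\beta$ makes the exponent $k+1-\theta\beta$ positive, so the left-hand side is increasing in $\|x\|\le\rho$ and attains its maximum $C(\rho/\rho')^{k+1}\rho^{-\theta\beta}$ at $\|x\|=\rho$.

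The main obstacle — and the point that pins down the choice of $\alpha$ — is that $M_k$ grows like $\rho'^{-k}$, so a naive argument would produce a neighborhood depending on $k$, whereas the statement requires a single fixed $\Omega$ to work for all $k>\alpha$. This is precisely what is cured by taking the working radius $\rho$ strictly smaller than the radius of absolute convergence $\rho'$: then the geometric factor $(\rho/\rho')^{k+1}$ tends to $0$ and eventually beats the fixed constant $C\rho^{-\theta\beta}$, so $C(\rho/\rho')^{k+1}\rho^{-\theta\beta}<1$ for every $k$ larger than some $\alpha$. For all such $k$ the inequality $|\varphi-T^k\varphi|<|\varphi|$ then holds throughout $X\setminus(\calH(Z,\theta)\cup\{O\})$ on the fixed ball $\Omega$, which is exactly the assertion.
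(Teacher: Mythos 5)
Your proof is correct and follows the same overall strategy as the paper's: a \L ojasiewicz-type lower bound $|\varphi(x)|\ge\|x\|^{c}$ on $X$ minus the horn-neighborhood, combined with the fact that the Taylor remainder of order $k>c$ is dominated by that bound, which pins $T^k\varphi(x)$ to the same side of $0$ as $\varphi(x)$. The executions differ in two minor but real ways. For the lower bound, the paper applies Proposition \ref{Loj} directly on the compact set $Z=X\setminus\calH(X\cap V(\varphi),\theta)$, comparing $\varphi$ with $\|x\|$ (legitimate because $V(\varphi)\cap Z=\{O\}$), whereas you compare $|\varphi|$ with $d(\cdot,X\cap V(\varphi))$ on all of $X$ and then feed in the horn condition $d(x,X\cap V(\varphi))\ge\|x\|^{\theta}$; both give the same kind of estimate. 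For the remainder, the paper uses only the soft fact that $\varphi-T^k\varphi=o(\|x\|^{\alpha})$ for $k>\alpha$ and a two-line contradiction, which produces a neighborhood of $O$ that may depend on $k$ --- sufficient here, since the lemma is invoked germwise with $k$ eventually fixed. Your explicit geometric-series bound $|\varphi-T^k\varphi|\le C\rho'^{-(k+1)}\|x\|^{k+1}$ buys something genuinely stronger, namely a single ball of radius $\rho<\rho'$ on which the conclusion holds simultaneously for all $k>\alpha$, at the cost of the extra bookkeeping with $C$, $\rho/\rho'$ and the monotonicity of $\|x\|^{k+1-\theta\beta}$. Both routes are valid; the uniformity in $k$ that you obtain is not needed in the paper's applications.
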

\begin{proof} Denote $Z= X \setminus \calH(X \cap V(\varphi),\theta)$. 
Since   $V(\varphi) \cap  Z = \{O\}$, by Proposition \ref{Loj} there exists $\alpha >0$ 
such that $\norm{x}^{\alpha} < |\varphi (x)|$ for all  $x\in Z\setminus \{O\}$.

For all  integers $k>\alpha$
$$\lim_{x\rightarrow O}\frac{\varphi (x) - T^k \varphi (x)} {\norm{x}^{\alpha}} =0.$$
 If $O$ is isolated in $Z$, there is nothing to prove. Otherwise assume, 
for a contradiction, that  any neighborhood of $O$ contains a point  $x\in Z$ such 
that $\varphi (x)$ and $T^k \varphi (x)$ have different signs (for instance $\varphi (x)>0$ and $T^k \varphi (x)\leq0$). Then 
$$\varphi (x) -T^k \varphi (x)\geq \varphi (x) > \|x\|^ \alpha$$
and hence $$\frac{\varphi (x) - T^k \varphi (x)} {\norm{x}^{\alpha}} >1$$ arbitrarily 
near to $O$, which is impossible.
\end{proof}

\begin{notation}
Let $g_1, \dots , g_l$ be analytic functions on $\Omega$ and let $f\colon \Omega \to \R^p$ 
be an analytic map. If $A=\{x \in \Omega\ |\ f(x)=O, g_i(x)\geq 0, i=1,\dots,l\},$ for any $h, k\in \N$ let 
\begin{enumerate}
\item $T^h(A)=\{x\in \Omega\ |\  T^hf(x)=O, g_i(x)\ge 0 \  i=1,\dots,l\}$
\item $T_k(A)=\{x\in \Omega\ |\  f(x)=O, T^kg_1(x)\ge 0, \ldots, T^kg_l(x) \ge 0  \}$
\item $T^h_k(A)=T^h(T_k(A))= \{x\in \Omega\ |\  T^hf(x)=O, T^kg_1(x)\ge 0, \ldots, T^kg_l(x) \ge 0 \}.$
\end{enumerate}
Moreover, for any analytic map $\varphi \colon \Omega \to \R^p$, denote  
$\Sigma_r(\varphi)=\{x\in \Omega \ |\ \rk\ d_x \varphi <r\}$, and $\Sigma(\varphi)=\Sigma_p(\varphi)$.
\end{notation}

\begin{lem}\label{trunc} 
Let $A$ be a closed semianalytic subset of $\Omega$, with $\dim_{O} A =d>0$. 
Assume that $A=\{f(x)=O, g_i(x)\geq 0, i=1,\dots,l\}$, with  $g_1, \dots , g_l$   analytic functions on $\Omega$ and $f\colon \Omega \to \R^{n-d}$ an analytic map. 
Assume also that $\dim_{O} (\Sigma (f)\cap A) <d$ and 
$\dim_{O} b(A) <d$. 
Then for any $s \geq 1$ there exist  $h_0 >0, k_0>0$ such that, for all integers $h,k$ with $h\geq h_0$ and $k \geq k_0$, we have 
\begin{enumerate}
\item $T^h_k(A) \leq_s A$
\item $\ol {A\setminus (\Sigma (f)\cup b(A))} \leq_s T^h_k(A)$
\item $\dim _O T^h_k(A) =d$.
\end{enumerate}
\end{lem}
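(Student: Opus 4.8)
The plan is to establish (a) and (b) by means of the horn-neighborhood criterion of Proposition \ref{horn-lemma}, and then to deduce (c) from them. For (c), part (a) gives $T^h_k(A)\leq_s A$, whence $\dim_O T^h_k(A)\le\dim_O A=d$, while part (b) gives $\ol{A\setminus(\Sigma(f)\cup b(A))}\leq_s T^h_k(A)$; since the hypotheses $\dim_O(\Sigma(f)\cap A)<d$ and $\dim_O b(A)<d$ force $\dim_O\ol{A\setminus(\Sigma(f)\cup b(A))}=d$, it follows that $\dim_O T^h_k(A)\ge d$, hence equality. Here I use that $U\leq_s W$ implies $\dim_O U\le\dim_O W$, a consequence of the inclusion of tangent cones entailed by $\leq_s$ for $s\ge 1$ (cf. the Introduction).

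For (a) I look for $\sigma>s$ with $T^h_k(A)\setminus\{O\}\subseteq\calH(A,\sigma)$. Let $x\in T^h_k(A)$. From $T^hf(x)=O$ we get $\norm{f(x)}=\norm{f(x)-T^hf(x)}\le C\norm{x}^{h+1}$, so by Proposition \ref{Loj} applied to $\norm f$ and $d(\cdot,V(f))$ we have $d(x,V(f))\le C\norm{x}^{(h+1)/\lambda}$; let $x'$ be a nearest point of $V(f)$. Applying Lemma \ref{Claim1} to each $g_i$ (on a closed ball, with $\theta$ chosen large) yields $k_0$ so that for $k\ge k_0$ the inequality $T^kg_i(x)\ge 0$ forces $g_i(x)\ge 0$ whenever $x\notin\calH(V(g_i),\theta)$; and when $x\in\calH(V(g_i),\theta)$ the Lipschitz bound $|g_i(x)|\le L\,d(x,V(g_i))<L\norm{x}^{\theta}$ applies. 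In either case the negative part satisfies $g_i^-(x')=O(\norm{x}^{\theta'})$ with $\theta'=\min\{\theta,(h+1)/\lambda\}$. A second use of Proposition \ref{Loj} on $V(f)$, applied to $G=\sum_i g_i^-$ (whose zero set on $V(f)$ is exactly $A$) and to $d(\cdot,A)$, gives $d(x',A)\le G(x')^{1/\gamma}\le C\norm{x}^{\theta'/\gamma}$, hence $d(x,A)\le C\norm{x}^{\theta'/\gamma}$. Choosing $\theta$ and $h$ large makes $\sigma=\theta'/\gamma>s$, proving (a). (Lemma \ref{Claim2} provides an alternative route, separating a point of $V(f)$ far from $A$ from one of the $A_i$.)

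For (b) write $S=\Sigma(f)\cup b(A)$, so $\dim_O(S\cap A)<d$. First, by Lemma \ref{XY}(b) applied with $Y=\ol{A\setminus S}$ and $X=(S\cap A)\cup\{O\}$, fix a large $\tau>s$ with $Z:=\ol{A\setminus S}\setminus\calH(X,\tau)\seq\ol{A\setminus S}$; on $Z$ one has $d(x,\Sigma(f)\cap A)\ge\norm{x}^{\tau}$ and $d(x,b(A))\ge\norm{x}^{\tau}$. Applying Proposition \ref{Loj} on $A$ to the smallest singular value $\nu$ of $df$ (vanishing exactly on $\Sigma(f)\cap A$) and to each $g_i$ (vanishing on $V(g_i)\cap A\subseteq b(A)$) gives $\nu(x)\ge c\norm{x}^{\tau a}$ and $g_i(x)\ge c\norm{x}^{\tau b}$ on $Z$. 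For each $x\in Z$ I will produce $y\in T^h_k(A)$ near $x$: since $d(T^hf)_z-df_z=O(\norm{z}^{h})$, for $h$ large $T^hf$ is a submersion on the ball about $x$ of radius $\sim\nu(x)$, with singular values $\ge\tfrac12\nu(x)$, while $\norm{T^hf(x)}=\norm{T^hf(x)-f(x)}\le C\norm{x}^{h+1}$. A quantitative submersion (Newton) argument then yields $y$ with $T^hf(y)=O$ and $\norm{y-x}\le C\norm{x}^{h+1}/\nu(x)\le C\norm{x}^{\,h+1-\tau a}$, which for $h$ large keeps $y$ inside that ball. Finally $g_i(y)\ge g_i(x)-L\norm{y-x}\ge\tfrac12 c\norm{x}^{\tau b}$ and $T^kg_i(y)\ge g_i(y)-C\norm{y}^{k+1}\ge 0$ once $h,k$ are large, so $y\in T^h_k(A)$ and $d(x,T^h_k(A))\le\norm{x}^{\,h+1-\tau a}<\norm{x}^{\sigma}$ with $\sigma>s$. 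Thus $Z\leq_s T^h_k(A)$, and transitivity through $Z\seq\ol{A\setminus S}$ gives (b).

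The main obstacle is precisely the construction in (b). The exponent $\tau$ is fixed first (from Lemma \ref{XY}(b)) and the \L ojasiewicz exponents $a,b$ are then given, so one must verify that $h,k$ can be taken large enough that simultaneously $T^hf$ is submersive near $x$ with the stated lower bound on its singular values (needs $h>\tau a$), the produced solution $y$ lies in the ball where that estimate is valid (needs $h+1-\tau a>\tau a$), and $T^kg_i(y)\ge 0$ (needs $k+1>\tau b$ and $h+1-\tau a>\tau b$). The genuinely delicate point is the \emph{uniformity} of the submersion estimate as $x$ ranges over $Z$, where $\nu(x)$ may degenerate like $\norm{x}^{\tau a}$; once this is secured, the remainder is \L ojasiewicz bookkeeping and a choice of $h_0,k_0$ making every exponent exceed $s$.
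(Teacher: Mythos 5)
Your overall strategy for (a) and (b) matches the paper's: restrict to the part of $A$ lying outside a horn-neighborhood of $(\Sigma(f)\cap A)\cup b(A)$, use Proposition \ref{Loj} to obtain polynomial lower bounds there, and then solve $T^hf(y)=O$ near each such point by a quantitative submersion argument. Part (a) is a legitimate variant (the paper instead shows $T_k(A)\subseteq A\cup\calH(X,\sigma)$ via Lemmas \ref{Claim2} and \ref{Claim1} and invokes Lemma \ref{XY}, then handles the $f$-truncation separately), and your bookkeeping there is essentially sound. The problem is that in (b) the one genuinely hard step is asserted rather than proved. You write that ``for $h$ large $T^hf$ is a submersion on the ball about $x$ of radius $\sim\nu(x)$, with singular values $\ge\frac12\nu(x)$'' and that ``a quantitative submersion (Newton) argument then yields $y$,'' and you then concede in your final paragraph that the uniformity of this estimate over $x\in Z$ is ``the genuinely delicate point'' still to be secured. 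That point is precisely what the paper's proof of (2) is mostly about: it introduces $\Lambda f$, the sets $W$ and $W_0$, the function $\varphi(x)=d((x,x),W_0)$ and a further \L ojasiewicz exponent $\tau$ to get a polynomial radius $\norm{x}^{\tau}$ on which $\Lambda f>\norm{x}^{\beta}$ uniformly, then proves the Claim $\Lambda T^hf\ge\norm{x}^{\beta+1}$ on that ball and applies the quantitative open-mapping lemma of \cite{FFW-germs} to land a zero of $T^hf$ within distance $\norm{x}^{\tau}$ of $x$. (Your sketched route --- Lipschitz continuity of $x\mapsto d_xf$ together with $\norm{d_yT^hf-d_yf}=O(\norm{y}^{h-1})$ --- could be made to work and is arguably simpler, but as written the step is missing.)

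Part (c) rests on a false principle. It is not true that $U\leq_s W$ implies $\dim_O U\le\dim_O W$: in $\R^2$ the two-dimensional set $U=\{0\le y\le x^{10},\,x\ge0\}$ satisfies $U\leq_s W$ for the ray $W=\{y=0,\,x\ge0\}$ and every $s<10$, since $U\setminus\{O\}\subseteq\calH(W,\sigma)$ for $\sigma<10$. Nor does the tangent-cone inclusion help, because the tangent cone of a set can have strictly smaller dimension than the set (the same $U$ has tangent cone $W$). So neither the upper bound $\dim_O T^h_k(A)\le d$ nor the lower bound $\dim_O T^h_k(A)\ge d$ follows from (a) and (b) in the way you claim. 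The paper obtains the lower bound directly from the construction in (2): the points $y$ produced there lie in $V(T^hf)$ arbitrarily close to $O$, $T^hf$ is submersive at them, and all $T^kg_i(y)>0$, so $T^h_k(A)$ contains $d$-dimensional manifold germs accumulating at $O$. You should extract the same information from your own construction rather than appeal to a monotonicity of dimension under $\leq_s$.
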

\begin{proof} 
Let $s\geq 1$ and let $\sigma  >s$. 
Denote $X=(\Sigma (f) \cap A) \cup \,b(A) $.
\smallskip

 (1)   Let $H=\mathcal H(X,\sigma)$. 
By Lemma \ref{Claim2} there exists $\eta$ such that, for each 
$x\in V(f) \setminus (A \cup H)$, there exists  $i_0$ so that $x\not\in \calH(A_{i_0},\eta)$.

For all $j$, applying  Lemma \ref{Claim1} to $V(f)$, $ g_j$ and $\eta$, we find $\alpha_1 
>0$ such that, for all  integers $k>\alpha_1$, the functions
$g_j$ and $T^kg_j$ have the same sign on 
$ V(f) \setminus (\calH(V(f)\cap V(g_j),\eta) \cup \{O\})$.

 Let $x \in V(f) \setminus  (A \cup H)$.   Then  $x\not\in \calH(A_{i_0},\eta)$ 
for some $i_0$ and hence $g_{i_0}(x)<0$; moreover, since  $V(f)\cap V(g_{i_0})\subseteq  A_{i_0}$, 
we have that $x\in V(f) \setminus (\calH(V(f)\cap V(g_{i_0}),\eta) \cup \{O\})$ and hence,
  for all integers $k>\alpha_1$, $T^k g_{i_0} (x)<0$. This implies that $T_k(A) \subseteq  A\cup H$.

Applying Lemma \ref{XY} (1) to the sets $X$ and $A$, we have $ A \seq A\cup H$, 
and so $T_k(A) \leq_s A$.

Let $B_k=\{x\in \Omega \ |\ T^kg_i\geq 0, \ i=1, \ldots, l\}.$ 

 Since $T_k(A) = B_k \cap V(f)$, by Proposition \ref{Loj}  there exists  $\rho >0$ such that 
$\|f(x)\|\ge d(x,T_k(A))^\rho $ for all $x\in B_k$; then for $x \in B_k \setminus 
\mathcal H(T_k(A),\sigma)$ we have that $\|f(x)\|\geq \|x\|^{\rho \sigma}$. 

Let $h$ be an integer such that $h \geq \rho \sigma$. 
Then $$ \lim_{x\to O}
\frac{\|f(x)-T^hf(x)\|}{\|x\|^{\rho \sigma}}=0.$$

We have that $T^h(T_k(A))\setminus\{O\}
\subseteq \mathcal H(T_k(A),\sigma)$; otherwise there would exist a sequence of
points $y_i\ne O$ converging to $O$ such that $y_i\in T^h(T_k(A))\setminus
\mathcal H(T_k(A),\sigma)$ and hence
$$ \lim_{i \to \infty}\frac{\|f(y_i)-T^hf(y_i)\|}{\|y_i\|^{\rho \sigma}}=
\lim_{i \to \infty}\frac{\|f(y_i)\|}{\|y_i\|^{\rho \sigma}} \geq 1$$ which is
a contradiction. 

Then by Proposition \ref{horn-lemma} we get that
$T^h_k(A)\leq_s T_k(A) \leq_s A$.
\smallskip

(2) Let   $Y= \ol{A \setminus X}$.  By our hypotheses  $O$ is not isolated in $Y$.

Since $\ol{Y\setminus X} =Y$, applying Lemma \ref{XY} 
(2) to the sets $X\cap Y$ and $Y$, up to increasing 
 $\sigma$ we have that $ Y \setminus \mathcal H(X\cap Y ,\sigma)\seq Y$. 
 Denote 
$$Y'=Y \setminus \mathcal H(X\cap Y ,\sigma) \quad \mbox{ and } \quad 
H_i=\mathcal H(V(g_i) \cap Y,\sigma).$$
  If for each $i$ we  apply Lemma \ref{Claim1} to $Y$, $ g_i$  and $\sigma$, we can find 
$\alpha _2>0$ such that, for all  integers $k>\alpha_2$,  the functions 
$g_i$ and $T^kg_i$ have the same sign on  $Y\setminus (H_i\cup\{O\})$. 

Since $V(g_i) \cap Y \subseteq X \cap Y$ for each $i$,  then $\bigcup H_i \subseteq \mathcal H(X\cap Y ,\sigma)$, 
and therefore $Y' \setminus \{O\} \subseteq \bigcap_i (Y\setminus (H_i\cup\{O\}))$. 
In particular 
$$  Y' \setminus \{O\} \subseteq  \{T^kg_1>0, \ldots, T^kg_l>0\}.$$

From now on, assume that $k>\alpha_2$. We will get the result by replacing $f$ with a 
suitable truncation of it in the presentation of  $T_k(A)$. We will denote by $B(x,r)$ 
the open ball centered at $x$ of radius $r$.

By the last inclusion, the distance $d(x,b(B_k))$ is subanalytic and positive on 
$Y' \setminus\{O\}$ so, by  Proposition \ref{Loj}, there exists  $\nu >0$ 
(and we can assume $\nu > s$) such that $d(x,b(B_k))> \|x\|^{\nu}$ for all $x$ in
$Y'\setminus \{O\}$. As a consequence 
 $$B(x,\|x\|^{\nu})\subseteq \{T^kg_1>0, \ldots, T^kg_l>0\}.$$

Following \cite{FFW-germs} consider the real-valued function 
 $$\Lambda f(x)= \begin{cases} 0 &\text{if rk $d_x f<n-d$}\\
 \inf_{v\perp\ker{d_x f},\|v\|=1}\|d_x f(v)\|
 &\text{if rk $d_x f=n-d$}\end{cases}.$$

Observe that $\Lambda f(x)$ is subanalytic, continuous and  positive where $f$ 
is submersive, in particular on $Y' \setminus
\{O\}$. Hence, again by Proposition \ref{Loj}, there exists  $\beta >0$ 
such that $\Lambda f(x)> \|x\|^\beta$ for all $x$ in
$Y'\setminus \{O\}$.

Consider the subanalytic set 
$W=\{(x,y)\in Y' \times \Omega \ |\ \Lambda f(y)\geq \|x\|^\beta\}$ and let $W_0=
\{(x,y)\in Y' \times \Omega \ |\ \Lambda f(y)= \|x\|^\beta\}$; then the set $\{(x,x) \ |\ x\in
Y'\setminus \{O\}\} $ is contained in the open subanalytic set $ W \setminus W_0$.

The function $\varphi \colon Y' \setminus \{O\} \to \R$ defined by $\varphi
(x)= d((x,x),W_0)$ is subanalytic and positive. Then again
by Proposition \ref{Loj} there exists $\tau >0$ (and we can assume
$\tau >\nu$) such that $\varphi(x) > \|x\|^{\tau}$ on $Y'\setminus
\{O\}$. Then for all $x\in Y'\setminus \{O\}$ and for all $y \in B(x,
\|x\|^{\tau})$ we have
$$\|(x,y)-(x,x)\|=\|y-x\|< \|x\|^{\tau}< \varphi(x).$$ Hence $(x,y)\in
W\setminus W_0$, i.e. for all $x$ in $Y'\setminus \{O\}$ and for all $y \in
B(x, \|x\|^{\tau})$ we have $\Lambda f(y)> \|x\|^\beta.$ In particular
$\Lambda f(y)> 0$ and hence $d_yf$ is surjective for all $y \in B(x,
\|x\|^{\tau})$.
\smallskip

Let $h$ be an integer such that $h> \beta +1$ and let $\widetilde f(x)= T^hf(x)$. 

Then $T^{h-1}d_y f=d_y\widetilde f$; thus we have
that $\|d_y f-d_y \widetilde f\|\leq \|y\|^{h-1}$ 
for all $y$ near to $O$, where we consider 
$\operatorname {Hom} (\R^n,\R^{n-d})$ endowed with the standard norm
$$\|L\|= \max_{u\ne 0}
\frac{\|L(u)\|}{\|u\|}.$$

Thus by \cite[Proposition 3.3]{FFW-germs} we have 
$$|\Lambda f(y)- \Lambda \widetilde f(y)|\leq \|y\|^{h-1}.$$ 

{\bf Claim:} for $x\in Y'\setminus \{O\}$ and for $y \in B(x,\|x\|^{\tau})$, we have

$$\Lambda \widetilde f(y)\geq \|x\|^{\beta +1}.$$

To see this, assume for a contradiction that there exist a sequence
$x_i\in Y'\setminus\{O\}$ converging to $O$ and a sequence $y_i\in  B(x_i,
\|x_i\|^{\tau})$ such that $\Lambda \widetilde f(y_i)< \|x_i\|^{\beta +1}$. Thus we
have 
$$\frac{\Lambda f(y_i) - \Lambda \widetilde f(y_i)}{\|x_i\|^\beta}
> \frac{\|x_i\|^\beta - \|x_i\|^{\beta +1}}{\|x_i\|^\beta}= 1 - \|x_i\|.$$ 
On the other hand 
$$\frac{\Lambda f(y_i) - \Lambda \widetilde f(y_i)}{\|x_i\|^\beta}
\leq \frac{\|y_i\|^{h-1}}{\|x_i\|^\beta} \leq
\frac{(\|y_i-x_i\|+\|x_i\|)^{h-1}}{\|x_i\|^\beta} =$$
$$=
\left(\frac{\|y_i-x_i\|}{\|x_i\|^q}+\|x_i\|^{1-q}\right)^{h-1} \leq
\left( \|x_i\|^{\tau -q}+\|x_i\|^{1-q}\right)^{h-1}
$$
where $q=\frac{\beta}{h-1}$. Since $\tau> 1$ and $q<1$, we have that
$$\frac{\Lambda f(y_i) - \Lambda \widetilde f(y_i)}{\|x_i\|^\beta}$$ converges to
$0$, which is a contradiction. So the Claim is proved.
\smallskip

Then for all $x\in Y' \setminus \{O\}$ the map $\widetilde f$ is a
submersion on $B(x, \|x\|^{\tau})$. Hence, using \cite[Lemma 3.5]{FFW-germs}, we get
$\widetilde f(B(x,\|x\|^{\tau}))\supseteq B(\widetilde f(x), \|x\|^{\lambda})$ with
$\lambda=\beta+1+\tau$.

Observe that if $x\in Y'\setminus\{O\}$, we have that 
$$\lim_{x \to O}\frac{\|\widetilde f(x)\|}{\|x\|^h}= \lim_{x \to
O}\frac{\|\widetilde f(x)-f(x)\|}{\|x\|^h}=0.$$ 
So, for any $h\geq \lambda$ and $x\in Y'$,
the point $O$ belongs to $B(\widetilde f(x), \|x\|^{\lambda})$ and hence there exists
$y\in B(x,\|x\|^{\tau})$ such that $\widetilde f(y)=O$. 

Since $\tau >\nu >s$, then $y\in B(x,\|x\|^{\nu})$ so that $T^kg_i(y)>0$ for all $i$, i.e. $y \in T^h_k(A)$; hence
$Y' \setminus\{O\} \subseteq {\mathcal H}(T^h_k(A),\lambda)$. Then by Proposition 
\ref{horn-lemma}  we have $Y'\le_s T^h_k(A)$ 
and hence, since $Y'\seq Y$,we have that  
$$\ol {A\setminus (\Sigma (f)\cup b(A))}=Y \le_s T^h_k(A).$$

Therefore, taking  $h_0= \max\{ \rho \sigma, \lambda\}$ and $k_0 = \max\{\alpha_1, \alpha_2\}$, we have  the thesis.
\smallskip

(3) The previous argument shows that, for all $h\geq h_0$ and $k \geq k_0$,   
there exist points $y\in V(T^h f)$ arbitrarily 
near to $O$ where $T^h  f$ is submersive and such  that  $T^kg_i(y)>0$ for all $i$. 
Hence $\dim _O T^h_k(A) =d$. 
\end{proof}

\begin{teo}\label{general-semi} 
Let $A$ be a closed semianalytic subset of $\Omega$ with $O\in A$. 
Then for any $s \geq 1$ there exists a closed semialgebraic set $S\subseteq \Omega$ 
such that 
$A \sim_s S$ and $\dim_O S= \dim_O A$.
\end{teo}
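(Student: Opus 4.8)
The plan is to argue by induction on $d=\dim_O A$. If $d=0$ then $O$ is isolated in $A$ and $S=\{O\}$ works, so assume $d>0$ and that the statement holds for every closed semianalytic germ of local dimension $<d$. By Lemma \ref{presentation} write $A=\left(\bigcup_{i=1}^r \Gamma_i\right)\cup \Gamma'$ with $\dim \Gamma'<d$ and each $\Gamma_i$ of local dimension $d$ admitting a good presentation. Since $\dim_O \Gamma'<d$, the inductive hypothesis yields a semialgebraic $S'$ with $\Gamma'\seq S'$ and $\dim_O S'<d$. By Proposition \ref{union}(2) it then suffices to produce, for each $i$, a semialgebraic $S_i$ with $\Gamma_i\seq S_i$ and $\dim_O S_i=d$, and to set $S=\left(\bigcup_i S_i\right)\cup S'$.

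So fix $\Gamma=\Gamma_i$ with good presentation $\Gamma=\{f_1=\cdots=f_p=0,\ g_1\geq 0,\dots,g_l\geq 0\}$, where $f_1,\dots,f_p$ generate $I(V)$ and $V=V_\Gamma$ is irreducible with $\dim_O V=d$. The aim is to put $\Gamma$ into the shape required by Lemma \ref{trunc}, i.e. to present it using exactly $n-d$ equations. Discarding any $g_j$ vanishing identically on $V$, each $V(g_j)\cap \Gamma$ has dimension $<d$, so $\dim_O b(\Gamma)<d$. Since $V$ is irreducible of codimension $n-d$, its regular locus is dense and there the differential of $(f_1,\dots,f_p)$ has rank $n-d$; hence a generic linear combination $f^\ast=L\circ(f_1,\dots,f_p)\colon \Omega\to\R^{n-d}$ is a submersion on a dense open subset of $V$, giving $\dim_O(\Sigma(f^\ast)\cap \Gamma)<d$.

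The delicate point, and what I expect to be the main obstacle, is that in general $V(f^\ast)=V\cup R$ with a residual set $R$ which may have dimension $d$ and pass through $O$; a direct computation on examples such as the monomial curve $(t^3,t^4,t^5)$ shows that no choice of $n-d$ linear combinations of generators of $I(V)$ removes such a residual branch. I would handle this by enlarging the presentation with finitely many auxiliary analytic inequalities $g'\geq 0$ that hold on $\Gamma$ near $O$ but are violated on $R\cap\{g_j\geq 0\}$ off $V$: since $\Gamma$ and the relevant part of $R$ are closed subanalytic sets meeting in dimension $<d$, such separating functions should be produced by a \L ojasiewicz/regular-separation argument (cf. Proposition \ref{Loj}), at the cost of pushing the locus where they vanish into the lower-dimensional exceptional set. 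After this adjustment one has $\{f^\ast=0,\ g_j\geq 0,\ g'\geq 0\}=\Gamma$ as germs at $O$, with $f^\ast\colon\Omega\to\R^{n-d}$, $\dim_O(\Sigma(f^\ast)\cap\Gamma)<d$ and $\dim_O b(\Gamma)<d$, so Lemma \ref{trunc} applies.

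Granting this, fix $s\geq 1$ and take $h\geq h_0$, $k\geq k_0$ as in Lemma \ref{trunc}. The truncation $T^h_k(\Gamma)$ is semialgebraic and satisfies $T^h_k(\Gamma)\leq_s\Gamma$, $\ol{\Gamma\setminus(\Sigma(f^\ast)\cup b(\Gamma))}\leq_s T^h_k(\Gamma)$ and $\dim_O T^h_k(\Gamma)=d$. Put $Z=\Gamma\cap(\Sigma(f^\ast)\cup b(\Gamma))$, a closed semianalytic set with $\dim_O Z<d$, and apply the inductive hypothesis to obtain a semialgebraic $S_Z$ with $Z\seq S_Z$ and $\dim_O S_Z<d$. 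Set $S_i=T^h_k(\Gamma)\cup S_Z$. Using $\Gamma=\ol{\Gamma\setminus(\Sigma(f^\ast)\cup b(\Gamma))}\cup Z$ together with transitivity of $\leq_s$ and Proposition \ref{union}(1), one checks both $S_i\leq_s\Gamma$ (from $T^h_k(\Gamma)\leq_s\Gamma$ and $S_Z\seq Z\subseteq\Gamma$) and $\Gamma\leq_s S_i$ (from $\ol{\Gamma\setminus(\Sigma(f^\ast)\cup b(\Gamma))}\leq_s T^h_k(\Gamma)\subseteq S_i$ and $Z\seq S_Z\subseteq S_i$); hence $\Gamma\seq S_i$ and $\dim_O S_i=d$. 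Finally Proposition \ref{union}(2) gives $A\seq S$ for $S=\left(\bigcup_i S_i\right)\cup S'$, with $\dim_O S=d=\dim_O A$, completing the induction.
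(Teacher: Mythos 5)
Your overall architecture coincides with the paper's: induction on $d=\dim_O A$, the decomposition of Lemma \ref{presentation}, replacement of the $p$ generators of $I(V)$ by a generic surjective linear image $F=\pi_0\circ f\colon\Omega\to\R^{n-d}$, then Lemma \ref{trunc} together with the inductive hypothesis applied to the exceptional set $(\Sigma(F)\cap A)\cup b(A)$, and reassembly via Proposition \ref{union}. The genuine gap is exactly at the point you yourself flag as ``the main obstacle'': the residual locus $R$ in $V(F)=V\cup R$. You propose to adjoin finitely many auxiliary analytic inequalities $g'\geq 0$ so that the new presentation equals $\Gamma$ \emph{as a germ at $O$}, and you defer their existence to an unspecified \L ojasiewicz/regular-separation argument. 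No such construction is given, and the exact equality you require is both stronger than needed and doubtful in general: $R$ may accumulate on $V$ along $R\cap V$, and points of $R\setminus V$ lying inside an arbitrarily thin horn around $V$ cannot obviously be excluded by closed analytic inequalities holding on all of $\Gamma$. Since this is the step that makes the reduction to $n-d$ equations legitimate, the proof as written is incomplete there.

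The paper's resolution is concrete and weaker in exactly the right way. It adjoins the single inequality $\|x\|^{2m}-\|f\|^2\geq 0$ (note $\|f\|^2=f_1^2+\cdots+f_p^2$ is analytic), obtaining $\widetilde A_m=\{F=0,\ \|x\|^{2m}-\|f\|^2\geq 0,\ g_i\geq 0\}$. This set is \emph{not} equal to $A$: it still contains the part of $R$ lying in the horn $\{\|f\|^2\leq\|x\|^{2m}\}$. But Proposition \ref{Loj} gives $d(x,A)^q\leq\|f(x)\|$ on $\{g_i\geq 0\}$, so for $m>sq$ one gets $\widetilde A_m\subseteq\calH(A,\tfrac{m}{q})$, hence $\widetilde A_m\leq_s A$, and with $A\subseteq\widetilde A_m$ this yields $\widetilde A_m\seq A$ --- an $s$-equivalence rather than an equality. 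Two consequences follow that your write-up does not address: first, one must also know that $F$ is submersive off a set of dimension $<d$ \emph{on the residual components}, which the paper gets from a parametric transversality argument over the space $\Pi$ of surjective linear maps (your ``generic linear combination is submersive on a dense subset of $V$'' only controls $\Sigma(F)$ along $V$); second, since $\widetilde A\neq A$, the exceptional set $\widetilde X=(\Sigma(F)\cap\widetilde A)\cup b(\widetilde A)$ has a piece $K=\widetilde X\cap(\widetilde A\setminus A)$ that need not have dimension $<d$ and cannot be fed to the inductive hypothesis; the paper discards it using $A\subseteq\ol{\widetilde A\setminus K}\subseteq\widetilde A$, and applies induction only to $\widetilde X\cap A$. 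If you replace your exact-separation step by this horn inequality and add the corresponding bookkeeping, your argument becomes the paper's proof.
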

\begin{proof} We will prove the thesis by induction on $d= \dim _O A$.

If  $d=0$ the result holds trivially. So let $d\geq 1$ and assume that the result holds for all semianalytic germs of dimension less that $d$.

By Lemma \ref{presentation}, by Proposition \ref{union} and by the inductive hypothesis, we can assume that 
$$A=\{x \in \Omega \ |\ f(x)=O, g_i(x)\geq 0, i=1,\dots,l\}$$ with $f=(f_1,\dots, f_p)$ such that $V(f)$ is irreducible, $V(f)$ is the minimal analytic variety containing $A$ and $f_1,\dots, f_p$ generate the ideal $I(V(f))$. In particular   $\dim _O(\Sigma_{n-d}(f) \cap A)<d$; moreover, removing from the previous presentation of $A$ the inequalities $g_i(x)\geq 0$ where $g_i$ vanishes identically on $A$ (if any), we can assume that $\dim _O b(A)<d$.

If $p=n-d$, the thesis follows easily by using Lemma \ref{trunc}.
 In general $p$ can be bigger than $n-d$; in this case we 
introduce a semianalytic set $\widetilde A$ of dimension $d$ which is $s$-equivalent to $A$ and which satisfies 
the hypotheses of Lemma \ref{trunc}. In order to prove the thesis it will be sufficient  to  approximate  $\widetilde A$  by means of a semialgebraic set having the same dimension. 

Denote by $\Pi$ the set of surjective linear maps from $\R^p$ to $\R^{n-d}$ and consider 
the smooth map $\Phi \colon (\R^n-V(f) ) \times \Pi \to \R^{n-d}$ defined by 
$\Phi (x, \pi) = (\pi \circ f) (x)$ for all $x\in \R^n-V(f) $ and $\pi \in \Pi$.

The map $\Phi$ is transverse to $\{O\}$: namely the partial Jacobian matrix of $\Phi$ with respect to the variables in $\Pi$ (considered as an open subset of 
$\R^{p(n-d)}$) is the $(n-d) \times p(n-d)$ matrix

$$ \bmatrix f(x)& 0&0 &\dots & 0\\ 0&f(x)&0&\dots &0  \\ \vdots \\0&0&0& \dots &f(x)\endbmatrix;$$
thus, for all $x\in \R^n-V(f)$ and for all $\pi \in \Pi$ the Jacobian matrix of
$\Phi$ has rank $n-d$. 

As a consequence, by a well-known result of singularity theory (see for instance \cite[Lemma 3.2]{Bruce-Kirk}), 
we have that the map $\Phi_{\pi} \colon  \R^n-V(f)   \to \R^{n-d}$ defined by 
$\Phi_{\pi} (x)= \Phi (x, \pi)=(\pi \circ f)(x)$ is transverse to $\{O\}$ for all $\pi$ outside a set $\Gamma \subset \Pi$ of measure zero and hence $\pi \circ f$ is a submersion on 
$V(\pi \circ f)\setminus V(f)$ for all such $\pi$.

Furthermore, let $x\in V(f)$ be a point at which $f$ has rank $n-d$; then there is an open dense set $U\subset \Pi$ such that  for all $\pi \in U$ the map $\pi \circ f$ is a submersion at  $x$, and hence off some subvariety of $V(f)$ of dimension less than $d$.

Thus, if we choose $\pi_0\in (\Pi\setminus \Gamma)\cap U$, the map $F= \pi_0 \circ f$ has $n-d$ components, $\Sigma(F) \cap V(F)\subseteq V(f)\subseteq V(F)$, $\dim_O V(F)=d$ and $\dim _O (\Sigma(F) \cap V(F))<d$. In particular $V(f)$ is an irreducible component of $V(F)$.

\smallskip

For each $m \in \N$ denote $\widetilde A_m=\{F=0,  \|x\|^{2m} - \|f\|^2 \geq 0, g_i(x)\geq 0, i=1,\dots,l\}$. 

Since $A \subseteq  \widetilde A_m \subseteq V(F)$, we have that $A \leq_s 
\widetilde A_m$ and $\dim _O \widetilde A_m =d$.

We claim that there exists $m$ such that $\widetilde A_m \sim_s A$; to  show that 
it is sufficient to prove that there exists $m$ such that $\widetilde A_m  \leq_s A$.
Namely, let $B=\{g_i(x)\geq 0, i=1,\dots,l\}$.  Since $V(\|f\|)\cap B=V(d(x,A) )\cap B $, by Proposition \ref{Loj}  there exists $q$ such that $d(x,A)^q \leq \|f(x)\|$ for all $x\in B$.
Let $m >  sq$. Then $d(x,A) \leq \|f(x)\|^{\frac 1q} \leq \|x\| ^ {\frac mq}$ for all $x\in \widetilde A_m$, i.e. $\widetilde A_m  \subseteq \mathcal H(A,\frac mq)$ and hence $\widetilde A_m  \leq_s A$. 

\medskip

Fix $m$ as above and let $\widetilde A =\widetilde A_m$.
Let also $\widetilde X= (\Sigma (F)\cap \widetilde A)\cup b(\widetilde A)$. 

Observe that $b(\widetilde A) \cap A = b(A)$ and so $\widetilde X\cap A= (\Sigma(F) \cap A) \cup b(A)$.

Denote $K= \widetilde X \cap (\widetilde A \setminus A)$ so that $\widetilde X = (\widetilde X\cap A) \cup K$.

By Lemma \ref{trunc} there exist positive integers $h, k$ such that 
$$\ol{\widetilde A\setminus \widetilde X}  \leq_s T^h_k(\widetilde A) \leq_s \widetilde A
\qquad \mbox{and} \qquad \dim_O T^h_k(\widetilde A)=d
.$$

Since $\dim_O (\widetilde X\cap A)<d$, by induction there exists a semialgebraic set $S_0$ such that $S_0 \seq \widetilde X\cap A$ and $\dim_O S_0 <d$. 
Moreover, since $A \subseteq \ol{\widetilde A \setminus K} \subseteq \widetilde A$, we have that 
$\ol{\widetilde A \setminus K} \sim_s \widetilde A$.

Then  
$$\widetilde A\sim_s \ol{\widetilde A \setminus K}=\ol{\widetilde A\setminus \widetilde X} \cup (\widetilde X\cap A) \leq_s T^h_k(\widetilde A) \cup S_0 \leq_s 
\widetilde A\cup (\widetilde X\cap A) = \widetilde A $$
so we can choose $S=T^h_k(\widetilde A) \cup S_0$. 
\end{proof}

From Theorem \ref{general-semi} and from Theorem \ref{approx-alg} we immediately obtain: 

\begin{teo}\label{general-alg} Let $A$ be a closed semianalytic subset of $\Omega$ of
codimension $\geq 1$ with $O\in A$. 
Then for any $s \geq 1$ there exists an algebraic set $Y \subset \R^n$ such that 
$A \sim_s Y$.

\end{teo}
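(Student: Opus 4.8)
The plan is to obtain the statement directly by chaining the two approximation results already established, so that only the transitivity of $\seq$ is really needed. First I would apply Theorem \ref{general-semi} to the closed semianalytic set $A$: for the given $s\geq 1$ this produces a closed semialgebraic set $S\subseteq\Omega$ with $A\seq S$ and, crucially, with $\dim_O S=\dim_O A$. The hypothesis that $A$ has codimension $\geq 1$ amounts to $\dim_O A\leq n-1$, so the dimension-preservation clause guarantees $\dim_O S=\dim_O A\leq n-1$ as well; hence $S$ is a closed semialgebraic set of codimension $\geq 1$, exactly the type of input required downstream.

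Next I would feed $S$ into Theorem \ref{approx-alg}: since $S$ is closed semialgebraic of codimension $\geq 1$, for the same $s$ there exists an algebraic subset $Y\subset\R^n$ with $S\seq Y$. Finally, since $\seq$ is an equivalence relation (as recorded after the definition of $s$-equivalence), transitivity applied to $A\seq S$ and $S\seq Y$ yields $A\seq Y$, which is precisely the desired conclusion. No new estimates, horn-neighborhood arguments, or \L ojasiewicz inequalities are invoked at this stage; all the analytic work has already been absorbed into the two cited theorems.

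The one point deserving a moment's attention — the closest thing here to an obstacle — is the bookkeeping on codimension. Theorem \ref{approx-alg} demands a semialgebraic set of codimension $\geq 1$, and it is exactly the dimension-preserving refinement $\dim_O S=\dim_O A$ in Theorem \ref{general-semi} that supplies this. Because $s$-equivalence depends only on the set-germ at $O$, one may, if necessary, replace $S$ by its intersection with a sufficiently small ball centered at $O$ (which remains semialgebraic and remains $\seq$ to $A$) so that the codimension condition holds for the actual representative passed to Theorem \ref{approx-alg}. Once this is checked the argument is immediate, which is why I would simply cite the two theorems and conclude by transitivity.
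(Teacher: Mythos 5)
Your proposal is correct and is exactly the paper's argument: the paper derives Theorem \ref{general-alg} immediately by combining Theorem \ref{general-semi} (whose dimension-preservation clause ensures the semialgebraic representative $S$ still has codimension $\geq 1$) with Theorem \ref{approx-alg}, and concluding by transitivity of $\seq$. Your extra remark about the codimension bookkeeping is the right point to check, and it is handled precisely as you describe.
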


\end{document}